\newtheorem{theorem}{Theorem}[section]
\newtheorem{lemma}[theorem]{Lemma}
\newtheorem{proposition}[theorem]{Proposition}
\newtheorem{corollary}[theorem]{Corollary}
\theoremstyle{definition}
\newtheorem{definition}[theorem]{Definition}
\newtheorem{example}[theorem]{Example}
\newtheorem{remark}[theorem]{Remark}
\begin{document}

\title[Almost prime ideals in noncommutative rings]
{Almost prime ideals in noncommutative rings}

\author[Alaa Abouhalaka, {\c S}ehmus F{\i}nd{\i}k]
{Alaa Abouhalaka, {\c S}ehmus F{\i}nd{\i}k}

\address{Department of Mathematics,
\c{C}ukurova University, 01330 Balcal\i,
 Adana, Turkey}
\email{alaa1aclids@gmail.com}
\email{sfindik@cu.edu.tr}
\thanks
{}

\subjclass[2010]{13A15; 17A01.}
\keywords{Almost prime ideals; noncommutative rings.}

\begin{abstract}
A proper ideal $P$ of a commutative ring with identity is an almost prime ideal if $ab \in P{\setminus}P^2$ implies $a \in P$ or $b \in P$.
In this paper we define almost prime ideals of a noncommutative ring,
and provide some equivalent definitions. We also examine some cases such that all right ideals of a noncommutative ring are almost prime right ideals.  
\end{abstract}

\maketitle

\section{Introduction}

We initiate by some basic definitions.
Let $R$ be a noncommutative ring. A right ideal $P$ of $R$ is called prime
if $AB\subseteq P$  implies $A\subseteq P$ or $B\subseteq P$ for all right ideals $A,B$ of $R$.
If the ring $R$ is with identity, then the previous definition is equivalent to the following. A right ideal $P$ of $R$ is called prime if
$aRb\subseteq P$ implies $a\in P$ or $b \in P$ for all $a,b\in R$.
A right ideal $P$ of $R$ is called weakly prime if  $ 0 \neq AB \subseteq P$ implies  $A \subseteq P$ or $B \subseteq P$, for all right ideals $A,B$ of $R$ \cite{HPT}.
A right ideal $P$ of $R$ is called idempotent if $P^2=P$. 

\subsection{Generalizations of prime ideals in commutative rings}

Almost prime ideals appear by studying of unique factorization in noetherian domains introduced by
Bhatwdekar and Sharma \cite{BS} in 2005. They proved in Theorem 2.15 that almost prime ideals are precisely primes in regular domains. 

On the other hand Bataineh \cite{B} (2006) handled the generalizations of prime ideals in his PhD thesis widely,
by giving more properties and theorems about almost prime ideals in commutative rings with identity.
He characterized noetherian rings in which every proper ideal is a product of almost prime ideals,
and proved that there are several equivalent conditions for an ideal to be almost prime in commutative rings
in Theorem 2.8 as follows.

\begin{theorem} Let $P$ be an ideal of a commutative ring $R$ with identity. Then the followings are equivalent. 

$(1)$ $P$ is almost prime.

$(2)$ If $a\in R{\setminus}P$, then $P:\{a\}=P\cup(P^2:\{a\})$.

$(3)$ If $a\in R{\setminus}P$, then $P:\{a\}=P$ or $P:\{a\}=P^2:\{a\}$.

$(4)$  If $AB \subseteq P$ and $AB\not\subseteq P^2$, then $A \subseteq P$ or $B \subseteq P$, for all ideals $A,B$ of $R$.

\end{theorem}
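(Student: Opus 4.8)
The plan is to establish the cycle of implications $(1)\Rightarrow(2)\Rightarrow(3)\Rightarrow(1)$ together with the separate equivalence $(1)\Leftrightarrow(4)$. Throughout I write $P:\{a\}=\{r\in R: ra\in P\}$ for the colon ideal, and I record at the outset the inclusion $P\cup(P^2:\{a\})\subseteq P:\{a\}$ valid for every $a\in R\setminus P$: indeed $r\in P$ gives $ra\in P$, while $r\in P^2:\{a\}$ gives $ra\in P^2\subseteq P$. This observation is what makes $(1)\Rightarrow(2)$ and $(2)\Rightarrow(3)$ routine.

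For $(1)\Rightarrow(2)$ I would fix $a\in R\setminus P$ and, using the inclusion just noted, reduce to proving the reverse containment $P:\{a\}\subseteq P\cup(P^2:\{a\})$. Given $r$ with $ra\in P$, I split on whether $ra\in P^2$: if so then $r\in P^2:\{a\}$, and otherwise $ra\in P\setminus P^2$, so almost primeness together with $a\notin P$ forces $r\in P$. The step $(2)\Rightarrow(3)$ is then purely group-theoretic: since $P:\{a\}$ is expressed as the union of the two additive subgroups $P$ and $P^2:\{a\}$, and a group is never the union of two proper subgroups, one of the two must coincide with $P:\{a\}$.

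For $(3)\Rightarrow(1)$, suppose $ab\in P\setminus P^2$ with $a\notin P$; then $ab\in P$ gives $b\in P:\{a\}$. Option $(3)$ yields either $P:\{a\}=P$, whence $b\in P$ as wanted, or $P:\{a\}=P^2:\{a\}$, whence $b\in P^2:\{a\}$ and $ab\in P^2$, contradicting $ab\notin P^2$. For $(4)\Rightarrow(1)$ I would specialize to the principal ideals $A=(a)$ and $B=(b)$: the hypotheses $ab\in P$ and $ab\notin P^2$ translate into $AB\subseteq P$ and $AB\not\subseteq P^2$, and $(4)$ returns $a\in P$ or $b\in P$.

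The substantive step, and the one I expect to be the main obstacle, is $(1)\Rightarrow(4)$. Assuming $AB\subseteq P$, $AB\not\subseteq P^2$, and (for contradiction) $A\not\subseteq P$ and $B\not\subseteq P$, I would fix witnesses $a\in A\setminus P$ and $b\in B\setminus P$; almost primeness applied to $ab\in P$ gives $ab\in P^2$. The key device is a translation trick: to handle an arbitrary product $xy$ with $x\in A$, $y\in B$, replace any factor lying in $P$ by its sum with $a$ or $b$, which pushes it out of $P$ while staying inside the ideal, apply almost primeness to the translated product, and cancel the cross terms, each already known to lie in $P^2$. Running this through the four cases according to whether $x,y$ lie in $P$ yields $xy\in P^2$ for all $x\in A$, $y\in B$, hence $AB\subseteq P^2$, the desired contradiction. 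Care is needed in the final case $x,y\in P$, where one expands $(x+a)(y+b)$ and must already have the mixed products $ay$ and $xb$ in $P^2$; ordering the cases so that these facts are in hand before they are invoked is the one point that genuinely requires attention.
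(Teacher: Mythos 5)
Your proof is correct, but it is worth pointing out that the paper never actually proves this statement: it is quoted as background (Theorem 2.8 of Bataineh's thesis \cite{B}), so the meaningful comparison is with the paper's own noncommutative generalization, Theorem \ref{theorem28}, whose proof contains this commutative theorem as a special case.

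The two arguments differ exactly at the step you call substantive. You pass from the element-wise condition to the ideal-wise condition, $(1)\Rightarrow(4)$, directly, by the translation trick familiar from the weakly prime literature of Anderson--Smith \cite{AS}: fix witnesses $a\in A\setminus P$, $b\in B\setminus P$, shift any factor lying in $P$ by a witness, apply almost primeness to the shifted product, and cancel cross terms. The paper instead makes the colon-ideal dichotomy the engine: in its step $(5)\Rightarrow(1)$, for $a\in A\setminus P$ one has $B\subseteq P:\langle a\rangle$, and $B\not\subseteq P$ forces $P:\langle a\rangle=P^2:\langle a\rangle$, whence $aB\subseteq P^2$; symmetrically $Ab\subseteq P^2$ for $b\in B\setminus P$; and the leftover piece $(A\cap P)(B\cap P)$ lies in $P^2$ trivially. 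So where you translate elements, the paper lets the colon ideals absorb the case analysis, and the element-wise conditions sit inside a single cycle $(1)\Rightarrow\cdots\Rightarrow(5)\Rightarrow(1)$ rather than being the source of the hard implication. Your route is more elementary and self-contained; the paper's route is the one that survives dropping commutativity, where one needs both $P:\langle a\rangle$ and the left-sided $(P:\langle a\rangle)^*$, and where the colon dichotomy replaces element translation altogether. Incidentally, your $(2)\Rightarrow(3)$ step --- a group is never the union of two proper subgroups --- is exactly the paper's Proposition \ref{AB}.

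One simplification to note in your argument: the final case $x,y\in P$ needs no translation at all, since $xy\in P\cdot P=P^2$ directly from the definition of the product ideal. Only the two mixed cases genuinely require the trick, so the ordering issue you flag, while handled correctly, does not actually arise.
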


\noindent Note that here $I:J=\left\{x\in R \mid Jx\subseteq I\right\}$ for subsets $I,J$ of $R$.

Also, Theorem 2.20 states that if $\langle a\rangle$ is an almost prime ideal of $R$, for a nonzero nonunit element $a$,
such that $0 : \langle a\rangle\subseteq \langle a\rangle$, then $\langle a\rangle$ is prime. His another result (Theorem 2.19) gives  a criterion on the forms of
almost prime ideals of cartesian product of rings.
\begin{theorem}

Let $R$ and $S$ be two commutative rings with identity. Then an ideal of $R \times S$ is almost prime if and only if it has one of the following three forms.

$(1)$ $I\times S$, where $I$ is an almost prime ideal of $R$.

$(2)$ $R \times J$, where $J$ is an almost prime ideal of $S$. 

$(3)$ $I \times J$ , where $I$ and $J$ are idempotent ideals of $R$ and $S$, respectively. 
\end{theorem}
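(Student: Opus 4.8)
The plan is to first record the elementary structure of ideals in a product ring and then handle the two implications separately. Every ideal $P$ of $R\times S$ is of the form $I\times J$, where $I=\{a\in R : (a,0)\in P\}$ and $J=\{b\in S : (0,b)\in P\}$ are ideals of $R$ and $S$; this holds because $(a,b)(1,0)=(a,0)$ and $(a,b)(0,1)=(0,b)$ lie in $P$ whenever $(a,b)$ does, and conversely $(a,b)=(a,0)+(0,b)$. I would also record that $P^2=I^2\times J^2$ and that $R^2=R$, $S^2=S$ since both rings have an identity. Thus the almost prime condition on $P=I\times J$ becomes a statement comparing $I$ with $I^2$ and $J$ with $J^2$ coordinatewise, which is exactly what makes the three forms appear.

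For sufficiency I would check each form directly. Form $(3)$ is the easiest: if $I$ and $J$ are idempotent then $P^2=I^2\times J^2=I\times J=P$, so $P\setminus P^2=\emptyset$ and the defining implication holds vacuously (here $P$ is assumed proper). For form $(1)$, with $I$ almost prime and $J=S$, I would take a product $(a_1,a_2)(b_1,b_2)=(a_1b_1,a_2b_2)$ lying in $P\setminus P^2$; since $P^2=I^2\times S$, membership forces $a_1b_1\in I\setminus I^2$, so almost primeness of $I$ gives $a_1\in I$ or $b_1\in I$, whence $(a_1,a_2)\in P$ or $(b_1,b_2)\in P$ because the second coordinate lies in $S=J$ automatically. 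Form $(2)$ is symmetric.

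For necessity, suppose $P=I\times J$ is almost prime and proper, and split into cases according to whether $I=R$ or $J=S$. If $J=S$ I would show $I$ is almost prime, giving form $(1)$: from $a_1b_1\in I\setminus I^2$ the element $(a_1,1)(b_1,1)=(a_1b_1,1)$ lies in $P\setminus P^2$, so almost primeness of $P$ forces $a_1\in I$ or $b_1\in I$; the case $I=R$ symmetrically yields form $(2)$. The remaining, and main, case is $I\neq R$ and $J\neq S$, where I would prove that $I$ and $J$ must both be idempotent. The key is a well-chosen test product: if $I$ were not idempotent, pick $x\in I\setminus I^2$ and consider $(x,1)(1,0)=(x,0)$, which lies in $P\setminus P^2$ because $x\notin I^2$ while $0\in J^2$; almost primeness would then force $(x,1)\in P$ or $(1,0)\in P$, i.e. $1\in J$ or $1\in I$, contradicting $J\neq S$ and $I\neq R$. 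Hence $I$ is idempotent, and by symmetry so is $J$, giving form $(3)$. I expect this last case to be the main obstacle, since it hinges on selecting the right pair of test elements; once the product $(x,1)(1,0)=(x,0)$ is identified the contradiction is immediate, so the difficulty is purely in locating it.
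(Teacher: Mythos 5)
Your proof is correct. Note, however, that the paper itself contains no proof of this statement: it is quoted in the introduction as Theorem 2.19 of Bataineh's thesis \cite{B}, so there is no in-paper argument to compare yours against; your write-up serves as a complete, self-contained proof of the cited result. All the key steps check out: every ideal of $R\times S$ decomposes as $I\times J$ (using the identities), $(I\times J)^2=I^2\times J^2$ (the two coordinates can be handled separately since $(u,v)=(u,0)+(0,v)$), sufficiency of each form is verified correctly (form $(3)$ vacuously since $P=P^2$, forms $(1)$--$(2)$ by reading off the first or second coordinate), and in the necessity direction your test elements $(a_1,1)(b_1,1)=(a_1b_1,1)$ and $(x,1)(1,0)=(x,0)$ do exactly what is needed --- the latter is the crux, since $x\in I\setminus I^2$ and $0\in J^2$ guarantee $(x,0)\in P\setminus P^2$ while neither factor can lie in $P$ when $I\neq R$ and $J\neq S$. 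One small remark: in form $(3)$ the degenerate pair $I=R$, $J=S$ must be excluded, since $R\times S$ is not a proper ideal and hence not almost prime; you handle this implicitly by assuming $P$ proper throughout, which is consistent with the paper's convention that ideals (and almost prime ideals in particular) are proper, but it is worth stating explicitly since as literally quoted the "if" direction of the theorem fails for that one pair.
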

\noindent He clarified the relationship between almost prime and weakly prime ideals (Theorem 2.13):
An ideal $P$ of a commutative ring $R$ is an almost prime ideal if and only if $P/P^2$ is a weakly prime ideal of $R/P^2$.

Another result (Corollary 2.8) by \cite{BS} states that in a quasilocal domain $R$, with maximal ideal $M$,
for an ideal $I$, with $M^2\subseteq I \subseteq M$; $I$ is almost prime if and only if $M^2 = I^2$. 
Later, it turned out to be true in any quasilocal ring, see Proposition 2.23 of \cite{B}.
It is also known by \cite{B} that in commutative ring $R$  every proper ideal is almost prime if and only if $R$ is
either von Neumann regular or quasilocal ring with maximal ideal $M$ such that $M^2 = 0$.

Recall that $P$ is a weakly prime ideal of a commutative ring $R$,
if $0\neq ab\in P$ implies $a\in P$ or $b\in P$ for all $a,b\in R$.
Anderson and Smith \cite{AS} studied weakly prime ideals for a commutative ring with identity in 2003.
They proved that if $P$ is weakly prime but not prime, then $P^2 = 0$ (Theorem 1).  
Also, they \cite{AS} showed in Theorem 3 that for an ideal $P$ of a commutative ring  $R$ with identity, the following statements are equivalent.

$(1)$ $P$ is a weakly prime ideal.

$(2)$ If $a\in R{\setminus}P$, then $P:\{a\}=P\cup(0:\{a\})$.

$(3)$ If $a\in R{\setminus}P$, then $P:\{a\}=P$ or $P:\{a\}=0:\{a\}$.

$(4)$  $0 \neq AB \subseteq P$ implies  $A \subseteq P$ or $B \subseteq P$, for all ideals $A,B$ of $R$.

\noindent The previous theorem shows us how similar prime and weakly prime ideals are.
It is well known that an ideal $P$ of a commutative ring  $R$ with identity is prime equivalent that
$P:\{a\}=P$ for all $a\in R{\setminus}P$, which in turn, is equivalent to the condition $AB \subseteq P$ implies
$A\subseteq P$ or $B \subseteq P$, for all ideals $A,B$ of $R$ (see e.g., Theorem 2.3, \cite{B}).

It is known by Theorem 7 of \cite{AS} that if $R= R_1\times R_2$ for commutative rings $R_1$, $R_2$ with identies,
then the ideal $P$ is a weakly prime ideal of $R$ implies $P =0$ or $P$ is prime. 

\subsection{Generalizations of prime ideals in noncommutative rings}
Hirano et al. \cite{HPT} extended the notion of weakly prime ideals in rings, nonnecessarily commutative
or with identity in 2010. They defined weakly prime (right) ideals and proved that
for an ideal $P$ in a ring $R$ with identity, the following statements are equivalent.

$(1)$ $P$ is a weakly prime ideal.

$(2)$ If $J, K$ are right ideals of $R$ such that $0\neq J.K \subseteq P$, then $J\subseteq P$ or $K\subseteq P$.

$(3)$ If  $a,b\in R$, such that $0\neq aRb\subseteq P$ then $a\in P$ or $b\in P$.

\noindent They \cite{HPT} also studied the structure of rings, not necessarily commutative, in which all (right) ideals are weakly prime,
as an analogue of the earlier work of Blair and Tsutsui \cite{BT}, who studied rings in which every ideal is prime.

Most recently in 2020, Groenewald \cite{Gr}
gave more properties of weakly prime ideals in noncommutative rings, and introduced the notion of the weakly prime radical of an ideal.
Theorem 1.14 states that for an ideal $P$ of any ring $R$, the following statements are equivalent.

$(1)$ $P$ is a weakly prime ideal.

$(2)$ If $a\in R{\setminus}P$, then $P:\langle a)=P\cup(0:\langle a))$.

$(3)$ If $a\in R{\setminus}P$, then $P:\langle a)=P$ or $P:\langle a)=0:\langle a)$.

\noindent Here the set $\langle a)=Ra+\mathbb Za$ is the left ideal of $R$ generated by $a$.
Similarly one may denote by $(a\rangle=aR+\mathbb Za$ the right ideal of $R$ generated by $a$.
Clearly if $R$ is a ring with identity, then $(a\rangle=aR$, $\langle a)=Ra$,
and in this case $\langle a\rangle=RaR$ is the principle ideal of $R$ generated by $a$.
The theorem above is a generalization of Theorem 3 of \cite{AS} given by Anderson and Smith. 

\subsection{The objective of the paper}
In this paper, we define almost prime (right) ideals in a noncommutative ring $R$ as follows.
\begin{definition}\label{basic}
A proper (right) ideal $P$ of $R$ is called an almost prime (right) ideal
if $AB\subseteq P$ and $AB\not\subseteq P^2$ implies either $A \subseteq P$ or $B \subseteq P$, for all (right) ideals $A$, $B$ of $R$.
\end{definition}
\noindent A quick observation gives that our definition and the concept of almost prime ideals in commutative rings are equivalent.
However, the definitions differ in noncommutative rings.
We show in Theorem \ref{theorem28} that $P$ is an almost prime ideal in a noncommutative ring $R$ with identity
if and only if $aRb\subseteq P$ and $aRb\not\subseteq P^2$ implies either $a \in P$ or $b \in P$  for all $a,b\in R$.

Note that every (weakly) prime right ideal is an almost prime right ideal.
Hence the concept of almost prime right ideal is a generalization of prime ideals.
However any almost prime right ideal does not need to be a prime right ideal, see Example \ref{example} $(i)$-$(ii)$.

We give several equivalent conditions for an ideal to be almost prime. We also show the analogy between almost prime right ideals, prime right ideals, and weakly prime right ideals.
Theorem \ref{weakly} of the present paper is a generalization of Theorem 2.13 of \cite{B} in a noncommutative setting. 
We also investigate images and the inverse images of almost prime right ideals under ring homomorphisms.

Finally, we define fully almost prime right rings.
Example \ref{example} $(ii)$ is an example of its existence. Also we show some cases in which we get a fully almost prime right ring.

Throughout this paper all rings are associative, noncommutative, and without identity unless stated otherwise, and by ideal we mean a proper two sided ideal.

\section{Almost prime right ideals}

It is clear by Definition \ref{basic}
that every prime (weakly prime, idempotent) right ideal is an almost prime right ideal. However the inverse is not true in general,
as indicated in the next example.

\begin{example}\label{example}
  
$(i)$  $0=\left\{ 0\right\}$ is an almost prime ideal of any ring. A ring whose zero ideal is prime (semiprime) is called prime ring (semiprime ring). Hence in this sense every ring is an almost prime ring.

$(ii)$ Let $R=\left\{0,a,b,c\right\}$ be the noncommutative ring with binary operations
\begin{center}
\begin{tabular}{|c|c|c|c|c|}
\hline
 +&0&$a$&$b$&$c$\\
\hline
0&$0$&$a$&$b$&$c$\\
\hline
$a$&$a$&$0$&$c$&$b$\\
\hline
$b$&$b$&$c$&$0$&$a$\\
\hline
$c$&$c$&$b$&$a$&$0$\\
\hline
\end{tabular}
\quad
\begin{tabular}{|c|c|c|c|c|}
\hline
$.$ &$0$&$a$&$b$&$c$\\
\hline
0&0&0&0&0\\
\hline
$a$&0&$a$&$a$&0\\
\hline
$b$&0&$b$&$b$&0\\
\hline
$c$&0&$c$&$c$&$0$\\
\hline
\end{tabular}
\end{center}
\noindent It is straightforward to show that $I=\left\{0,b\right\}$, $J=\left\{0,c\right\}$, $P=\left\{0,a\right\}$ are all right ideals of $R$.
Moreover, $IJ=0\subseteq P$,  $I\not\subseteq P$,  and $J\not\subseteq P$. Hence the right ideal $P$ is not a prime right ideal. On the otherhand,
$P$ is an almost prime right ideal as a consequence of the fact that $P^2=P$. 

$(iii)$ An application of our previous example  is the subring $R$ of $M_2(\mathbb Z_2)$, where  
\[
R=\{0,e_{11}+e_{12},e_{21}+e_{22},e_{11}+e_{12}+e_{21}+e_{22}\}.
\]
The right ideals of $R$ are $I=\{0,e_{21}+e_{22}\}$, $J=\{0,e_{11}+e_{12}+e_{21}+e_{22}\}$, and $P=\{0,e_{11}+e_{12}\}$.
Then $P$ is an almost prime right ideal of $R$ which is not prime. Notice that the ideals $I,J$ are also almost prime right ideals.
Hence all the right ideals of the ring $R$ are almost prime right ideals. 

$(iv)$ In the ring $R=\{ae_{11}+be_{12}+ce_{22}\mid a,b,c\in\mathbb R\}$ with identity, 
the right ideal $P=\{ae_{22}\mid a\in\mathbb R\}$
is an almost prime right ideal, but it is not a prime right ideal, since
$e_{12}(ae_{11}+be_{12}+ce_{22})2e_{12}=0$, and thus $0=(e_{12})R(2e_{12})\subseteq P$; however, $e_{12},2e_{12}\notin P$.
\end{example}

\begin{proposition} Let $R$ be a ring with identity, and $P$ be an ideal of $R$. Then the following statements are equivalent.

$(1)$ $P$ is an almost prime right ideal.

$(2)$  $P$ is an almost prime  ideal.

\end{proposition}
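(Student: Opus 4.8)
The plan is to prove the two implications separately, with $(1)\Rightarrow(2)$ being immediate and $(2)\Rightarrow(1)$ carrying all the content. For $(1)\Rightarrow(2)$ I would simply observe that every two-sided ideal of $R$ is in particular a right ideal, so the defining condition for an almost prime right ideal, quantified over \emph{all} right ideals $A,B$, specializes to the condition defining an almost prime ideal, in which $A,B$ range only over two-sided ideals. Since $P$ is a proper two-sided ideal throughout, it is in particular a proper right ideal, so no properness issue arises. Thus the substance of the proposition is the reverse implication.

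For $(2)\Rightarrow(1)$ the idea is to reduce any pair of right ideals witnessing a potential failure of the almost-prime condition to a pair of two-sided ideals, using the identity of $R$. Given right ideals $A,B$ with $AB\subseteq P$ and $AB\not\subseteq P^2$, I would pass to the two-sided ideals they generate, namely $RA$ and $RB$. Here I would record the two elementary facts that drive the reduction: because $1\in R$ we have $A\subseteq RA$ and $B\subseteq RB$, and because $A,B$ are right ideals we have $AR=A$ and $BR=B$. The latter shows $(RA)R=R(AR)=RA$, so $RA$ is indeed two-sided (and likewise $RB$).

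The key computation is to check that the pair $(RA,RB)$ again satisfies the hypotheses of the almost-prime condition for two-sided ideals. Using $AR=A$ one obtains $(RA)(RB)=R(AB)$; since $P$ is two-sided this gives $(RA)(RB)=R(AB)\subseteq RP\subseteq P$. For the second condition, $AB=1\cdot AB\subseteq R(AB)=(RA)(RB)$ forces $(RA)(RB)\not\subseteq P^2$, since otherwise $AB\subseteq P^2$ would follow, contrary to hypothesis. Applying $(2)$ to the two-sided ideals $RA,RB$ then yields $RA\subseteq P$ or $RB\subseteq P$, and descending via $A\subseteq RA$ and $B\subseteq RB$ gives $A\subseteq P$ or $B\subseteq P$, as required.

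The step I expect to be the main obstacle is the identity $(RA)(RB)=R(AB)$ together with the containment $R(AB)\subseteq P$: both rely essentially on the presence of the identity (to guarantee $AR=A$ and $A\subseteq RA$) and on $P$ being two-sided (to guarantee $RP\subseteq P$), which is precisely why one should not expect this equivalence to survive without these hypotheses. By contrast, the condition $(RA)(RB)\not\subseteq P^2$ is essentially automatic from $AB\subseteq (RA)(RB)$. Alternatively, one could route the argument through the element-wise characterization in Theorem~\ref{theorem28}, but the direct passage to the generated two-sided ideals is both shorter and more transparent.
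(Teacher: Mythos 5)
Your proposal is correct and follows essentially the same route as the paper's own proof: the forward direction is immediate since two-sided ideals are right ideals, and the reverse direction passes from right ideals $A,B$ to the two-sided ideals $RA,RB$, using $AR=A$ and $1\in R$ to get $(RA)(RB)=RAB\subseteq P$, $AB\subseteq(RA)(RB)$ (hence $(RA)(RB)\not\subseteq P^2$), and finally $A\subseteq RA\subseteq P$ or $B\subseteq RB\subseteq P$. No meaningful differences to report.
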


\begin{proof}  $(1)\Rightarrow(2)$ Trivial by definition.

$(2)\Rightarrow(1)$ Suppose that  $AB\subseteq P$, and $AB\not\subseteq P^2$, for right ideals $A$ and $B$ of $R$.
Since $R$ is with identity, then $AR=A$, and
\[
(RA)(RB) = RAB \subseteq RP=P
\]
for ideals $RA$ and $RB$. On the other hand, if $(RA)(RB) \subseteq P^2$, then
\[
AB\subseteq RAB= (RA)(RB) \subseteq P^2
\]
is a contradiction. Thus $(RA)(RB) \not\subseteq P^2$, and by $(2)$ we have
either $A\subseteq RA\subseteq P$ or  $B\subseteq RB\subseteq P$.
\end{proof}

In the following we give some similarities between almost prime right ideal, weakly prime right ideal and prime right ideal. 
Initially, let us remind the next definition.

\begin{definition}  Let $R$ be a ring and $I,J$ right ideals of $R$. The right ideal $I:J$ (known as colon right ideal) of $R$
is defined as $I:J=\left\{x \in R \mid Jx\subseteq I\right\}$. Similarly we define the set $(I:J)^*=\left\{x \in R \mid  xJ\subseteq I\right\}$.
Recall that in case of $I,J$ being ideals, so are $I:J$ and $(I:J)^*$.   
\end{definition} 

The following proposition is well known.

\begin{proposition}\label{AB} For the right ideals $A$, $B$, $P$ of any ring $R$, if $P\subseteq A\cup B$ then either $P\subseteq A$ or $P\subseteq B$.
In particular, if $P= A\cup B$ then either $P= A$ or $P= B$.

\end{proposition}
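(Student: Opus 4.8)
The plan is to argue by contradiction, using only the additive group structure of the right ideals involved. The key observation is that the multiplicative structure is irrelevant here: a right ideal is in particular a subgroup of the abelian group $(R,+)$, and the statement is the well-known fact that a subgroup contained in the union of two subgroups must already lie inside one of them. So I would phrase everything in terms of closure under addition and subtraction (negation), and not touch the ideal axioms beyond this.

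Concretely, suppose toward a contradiction that $P\not\subseteq A$ and $P\not\subseteq B$. Then I would pick witnesses $x\in P\setminus A$ and $y\in P\setminus B$. Since $P\subseteq A\cup B$ and $x\notin A$, necessarily $x\in B$; symmetrically $y\in A$. Next I would form $x+y$, which lies in $P$ because $P$ is closed under addition, and hence in $A\cup B$. Splitting into the two cases $x+y\in A$ and $x+y\in B$: in the first case, since $y\in A$ and $A$ is closed under subtraction, $x=(x+y)-y\in A$, contradicting $x\notin A$; in the second case, since $x\in B$, we get $y=(x+y)-x\in B$, contradicting $y\notin B$. Either way we reach a contradiction, which establishes $P\subseteq A$ or $P\subseteq B$.

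For the ``in particular'' clause, I would note that $P=A\cup B$ gives in particular $P\subseteq A\cup B$, so the first part yields $P\subseteq A$ or $P\subseteq B$; say $P\subseteq A$. But $A\subseteq A\cup B=P$ as well, so $P=A$, and the case $P\subseteq B$ gives $P=B$ by the same reasoning. I do not expect any genuine obstacle: the argument is elementary, and the only point worth flagging is that it relies solely on the additive group structure, so it holds for right ideals of an arbitrary ring without identity.
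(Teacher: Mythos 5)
Your proof is correct, and it is worth noting that the paper itself offers no proof of this proposition at all---it is simply declared ``well known''---so there is no paper argument to diverge from. Your argument is exactly the standard one underlying that claim: the fact that a group cannot be covered by two proper subgroups, applied to the additive group structure; the witness-swapping step ($x\in P\setminus A$ forces $x\in B$, then $x+y$ lands in one of the two and pulls the other element in by subtraction) is airtight, the ``in particular'' clause follows correctly from $A\subseteq A\cup B=P$, and your observation that only closure under addition and negation is used (so the statement needs no identity and no multiplicative axioms) is precisely why the proposition holds in the generality the paper requires.
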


Now we provide several equivalent conditions of being an almost prime ideal.  

\begin{theorem}\label{theorem28}
Let $R$ be ring with identity, and $P$ be an ideal of $R$ then the following statements are equivalent:

$(1)$ $P$ is an almost prime ideal.

$(2)$ If $(a\rangle(b\rangle\subseteq P$, $(a\rangle(b\rangle\not\subseteq P^2$, then either $a \in P$ or $b\in P$, where $a,b\in R$. 

$(3)$ If $aRb\subseteq P$, $aRb\not\subseteq P^2$, then either $a\in P$ or $b\in P$, where $a,b\in R$.  

$(4)$ $P:\langle a\rangle$= $P\cup (P^2:\langle a\rangle)$ and $(P:\langle a\rangle)^*=P\cup (P^2:\langle a\rangle)^*$ for all $a\in R{\setminus}P$.

$(5)$ Either $P:\langle a\rangle= P$ or $P:\langle a\rangle=P^2:\langle a\rangle$, and
either $(P:\langle a\rangle)^*= P$ or  $(P:\langle a\rangle)^*=(P^2:\langle a\rangle)^*$  for all $a\in R{\setminus}P$.
\end{theorem}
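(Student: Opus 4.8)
The plan is to prove the equivalences by splitting them into two linked cycles: first the ideal-product conditions $(1)\Rightarrow(2)\Rightarrow(3)\Rightarrow(1)$, and then the colon-ideal conditions, which I would attach to the first cycle through $(3)\Rightarrow(4)$, the equivalence $(4)\Leftrightarrow(5)$, and a return arrow $(5)\Rightarrow(3)$. Throughout, the standing hypothesis that $R$ has an identity is what lets me pass between an element and the (right) ideal it generates: since $1\in R$ one has $a\in aR=(a\rangle$ and $a\in RaR=\langle a\rangle$, together with the pointwise inclusions $aRb\subseteq RaRb$ and $aRb\subseteq aRbR$, all of which I will use repeatedly.

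For $(1)\Rightarrow(2)$ I would invoke the preceding Proposition, which identifies almost prime ideals with almost prime right ideals when $R$ has identity; then for principal right ideals $(a\rangle,(b\rangle$ with $(a\rangle(b\rangle\subseteq P$ and $(a\rangle(b\rangle\not\subseteq P^2$ the almost prime right ideal property yields $(a\rangle\subseteq P$ or $(b\rangle\subseteq P$, and $a\in(a\rangle$, $b\in(b\rangle$ finish it. For $(2)\Rightarrow(3)$ the key observation is $(a\rangle(b\rangle=aRbR=(aRb)R$: if $aRb\subseteq P$ then $(aRb)R\subseteq P$ since $P$ is a right ideal, while $aRb\subseteq(a\rangle(b\rangle$ shows $(a\rangle(b\rangle\not\subseteq P^2$ whenever $aRb\not\subseteq P^2$, so condition $(2)$ applies verbatim. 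Both steps are routine bookkeeping with the identity.

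The main obstacle is $(3)\Rightarrow(1)$, where the squared ideal $P^2$ blocks the naive prime-style argument. Given ideals $A,B$ with $AB\subseteq P$ but $AB\not\subseteq P^2$, I would argue by contradiction assuming $A\not\subseteq P$ and $B\not\subseteq P$, and establish the claim that $aRb\subseteq P^2$ for every $a\in A$, $b\in B$; taking $r=1$ this forces $ab\in P^2$ for all generators, hence $AB\subseteq P^2$, a contradiction. The claim is immediate from $(3)$ when $a\notin P$ and $b\notin P$, since there $aRb\subseteq AB\subseteq P$, so $aRb\not\subseteq P^2$ would give $a\in P$ or $b\in P$. The delicate part is promoting this to all $a,b$: fixing $a_1\in A\setminus P$ and $b_1\in B\setminus P$, I would replace an offending $a\in P$ by $a+a_1\notin P$ (and likewise $b$ by $b+b_1$) and recover $aRb\subseteq P^2$ from the already-settled cases via the bilinearity identities $arb=(a+a_1)rb-a_1rb$ and, when both $a,b$ lie in $P$, the four-term expansion $arb=(a+a_1)r(b+b_1)-(a+a_1)rb_1-a_1r(b+b_1)+a_1rb_1$, using that $P^2$ is closed under subtraction. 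This perturbation argument is the heart of the proof and the one place where the $P^2$ hypothesis is genuinely exploited.

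Finally I would close the colon conditions. For $(3)\Rightarrow(4)$ note first that $P\subseteq P:\langle a\rangle$ (as $P$ is an ideal) and $P^2:\langle a\rangle\subseteq P:\langle a\rangle$, so one inclusion in $(4)$ is automatic; for the reverse, if $x\in P:\langle a\rangle$ then $RaRx\subseteq P$, hence $aRx\subseteq P$, and $(3)$ gives either $x\in P$ or $aRx\subseteq P^2$, the latter yielding $RaRx=R(aRx)\subseteq P^2$, i.e. $x\in P^2:\langle a\rangle$; the starred identity is proved symmetrically using $xRaR$ and $xRa$. The equivalence $(4)\Leftrightarrow(5)$ is then a direct application of Proposition \ref{AB}: since $P:\langle a\rangle$ is exhibited as the union of the two right ideals $P$ and $P^2:\langle a\rangle$ that it contains, it must equal one of them, and conversely each alternative in $(5)$ reconstructs the union in $(4)$. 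To return to the product form, for $(5)\Rightarrow(3)$ suppose $aRb\subseteq P$ with $a,b\notin P$; then $b\in P:\langle a\rangle$, the alternative $P:\langle a\rangle=P$ contradicts $b\notin P$, while $P:\langle a\rangle=P^2:\langle a\rangle$ gives $RaRb\subseteq P^2$ and hence $aRb\subseteq P^2$, contradicting the hypothesis. This closes all the equivalences.
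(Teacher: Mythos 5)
Your proof is correct, but it closes the equivalences along a genuinely different route from the paper. The paper proves the single cycle $(1)\Rightarrow(2)\Rightarrow(3)\Rightarrow(4)\Rightarrow(5)\Rightarrow(1)$, so the colon-ideal conditions carry the full weight of returning to $(1)$: in $(5)\Rightarrow(1)$ the paper takes ideals $A,B$ with $AB\subseteq P$, $A\not\subseteq P$, $B\not\subseteq P$, uses the unstarred alternative of $(5)$ to get $(A\setminus P)B\subseteq P^2$, the starred alternative to get $A(B\setminus P)\subseteq P^2$, and concludes from the set-level decomposition $AB\subseteq(A\setminus P)B+A(B\setminus P)+(A\cap P)(B\cap P)\subseteq P^2$. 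You instead close the product conditions first, proving $(3)\Rightarrow(1)$ directly by a perturbation argument: settle $aRb\subseteq P^2$ for $a\in A\setminus P$, $b\in B\setminus P$, then extend to offending elements of $P$ by translating by fixed $a_1\in A\setminus P$, $b_1\in B\setminus P$ and expanding bilinearly, and finally take $r=1$; the colon conditions are then attached as the subsidiary cycle $(3)\Rightarrow(4)\Rightarrow(5)\Rightarrow(3)$ (your $(3)\Rightarrow(4)$ and $(4)\Leftrightarrow(5)$ steps essentially coincide with the paper's). Both closing arguments rest on the same underlying idea of splitting products according to membership in $P$, but yours is elementwise, in the style of Anderson--Smith's treatment of weakly prime ideals, while the paper's stays entirely inside the colon-ideal framework and never manipulates individual elements. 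One small dividend of your route: your return arrow $(5)\Rightarrow(3)$ uses only the unstarred alternative of $(5)$, so your proof implicitly shows that this half of condition $(5)$ already implies all the others, whereas the paper's closing step needs both halves. Both proofs invoke Proposition \ref{AB} at the same place and exploit the identity element in the same ways.
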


\begin{proof} 
 $(1)\Rightarrow(2)$ Let $a,b\in R$ such that $(a\rangle (b\rangle\subseteq P$, $(a\rangle (b\rangle\not\subseteq P^2$. This implies that
\[
(RaR)(RbR)=R(a\rangle R(b\rangle=R(a\rangle (b\rangle\subseteq RP=P.
\]
Assume that $(RaR)(RbR)\subseteq P^2$. Then
\[
(a\rangle (b\rangle\subseteq R(a\rangle (b\rangle=(RaR)(RbR)\subseteq P^2
\]
contradicts with $(a\rangle (b\rangle\not\subseteq P^2$. Hence, $(RaR)(RbR)\not\subseteq P^2$.
Thus, by (1) either $RaR\subseteq P$ or $RbR\subseteq P$, which implies that either $a \in P$ or $b \in P$.

 $(2)\Rightarrow(3)$ Let $a,b\in R$ such that $aRb\subseteq P$, $aRb\not\subseteq P^2$. Then
\[
(a\rangle(b\rangle=aRbR\subseteq PR=P.
\]
Assuming that $(a\rangle(b\rangle\subseteq P^2$, we get
\[
aRb\subseteq aRbR=(a\rangle(b\rangle\subseteq P^2,
\]
which is a contradiction. Hence, $(a\rangle(b\rangle\not\subseteq P^2$, and by $(2)$ either $a \in P$ or $b \in P$.

$(3)\Rightarrow(4)$ Let $a\in R{\setminus}P$ and $b\in P:\langle a\rangle$. Then, by definition of colon ideals we get $aRb\subseteq\langle a\rangle b\subseteq P$.
If $aRb\not\subseteq P^2$, then by $(3)$ we have $b\in P$, since $a\in R{\setminus}P$. 
If $aRb\subseteq P^2$, then $\langle a\rangle b=RaRb\subseteq RP^2=P^2$. Hence, $b\in P^2:\langle a\rangle$. 
Consequently, $P:\langle a\rangle\subset P\cup (P^2:\langle a\rangle)$.

Now let $b\in P\cup (P^2:\langle a\rangle)$. If $b\in P$, then $\langle a\rangle b\subseteq \langle a\rangle P\subseteq P$, and thus $b\in P:\langle a\rangle$. 
If $b\in P^2:\langle a\rangle$, then $\langle a\rangle b\subseteq P^2\subseteq P$, which implies that $b\in P:\langle a\rangle$. Therefore,
$P\cup (P^2:\langle a\rangle)\subseteq P:\langle a\rangle$, and hence $P:\langle a\rangle=P\cup (P^2:\langle a\rangle)$.
One may prove the second equality similarly.

$(4)\Rightarrow(5)$ The proof is a consequence of Proposition \ref{AB}.

$(5)\Rightarrow(1)$ Let $A$ and $B$ be two ideals of $R$ such that $AB\subseteq P$.
Assume that $A\not\subseteq P$ and $B\not\subseteq P$. Hence, we have to show that $AB\subseteq P^2$.

Let $a\in A{\setminus}P$. Then, we have $\langle a\rangle B\subseteq AB\subseteq P$, which implies by definition
of colon ideal that
$B\subseteq P:\langle a\rangle$. Now by $(5)$ we get $B\subseteq P^2:\langle a\rangle=P:\langle a\rangle$,
since $B\not\subseteq P$. Therefore, $aB \subseteq\langle a \rangle B \subseteq P^2$.
Consequently, $(A{\setminus}P)B \subseteq P^2$.

Let $b\in B{\setminus}P$. Then, $A\langle b \rangle\subseteq AB\subseteq P$, and so $A\subseteq(P:\langle b  \rangle)^*$.
Now by $(5)$ we get $A\subseteq(P^2:\langle b \rangle)^*=(P:\langle b  \rangle)^*$, because $A\not\subseteq P$.
Thus, $Ab\subseteq A\langle b \rangle\subseteq P^2$, which implies that $A(B{\setminus}P)\subseteq P^2$.
Finally, we get that
\begin{align}
AB&=(A{\setminus}P)B+(A\cap P)(B{\setminus}P)+(A\cap P)(B\cap P)\nonumber\\
&\subseteq(A{\setminus}P)B+ A(B{\setminus}P)+(A\cap P)(B\cap P)\subseteq P^2,\nonumber
\end{align}
which completes the proof.
\end{proof}
Note that an almost prime right ideal does not have to be a prime right ideal or a weakly prime right ideal. 
In the next two theorems, we examine some cases such that the statements above are equivalent.

\begin{theorem}
Let $R$ be ring with identity, and $P$ be a right ideal of $R$ such that $(P^2:P)\subseteq P$. Then the following statements are equivalent.

$(1)$ $P$ is a prime right ideal.

$(2)$ $P$ is an almost prime right ideal. 

\end{theorem}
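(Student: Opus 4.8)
The forward implication $(1)\Rightarrow(2)$ needs no work: as observed just after Definition~\ref{basic}, every prime right ideal is automatically an almost prime right ideal, and this uses nothing about the hypothesis $(P^2:P)\subseteq P$. So the entire content sits in the converse $(2)\Rightarrow(1)$, and that is where I would concentrate the effort.

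The first thing I would do is rewrite the hypothesis in a usable form. Since $P\cdot P\subseteq P^2$ we always have $P\subseteq P^2:P$, so the assumption $(P^2:P)\subseteq P$ is really the equality $P^2:P=P$. Unpacking the colon, this says exactly: for $x\in R$, if $Px\subseteq P^2$ then $x\in P$; contrapositively, every $x\notin P$ satisfies $Px\not\subseteq P^2$. This is the only point at which the hypothesis will enter the argument, so I want it in this sharp form.

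For $(2)\Rightarrow(1)$ I would argue by contradiction. Suppose $A,B$ are right ideals with $AB\subseteq P$ but $A\not\subseteq P$ and $B\not\subseteq P$; the goal is to contradict almost-primeness. The key move is to replace the left factor $A$ by the right ideal $A+P$, which still satisfies $A+P\not\subseteq P$. Then $(A+P)B=AB+PB$, and since $P$ is a right ideal we have $PB\subseteq PR\subseteq P$, whence $(A+P)B\subseteq P$. The reason for adjoining $P$ is that it injects the term $PB$ into the product: because $B\not\subseteq P=P^2:P$, some $b\in B$ has $Pb\not\subseteq P^2$, so $PB\not\subseteq P^2$ and therefore $(A+P)B\not\subseteq P^2$. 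Now $(A+P)B\subseteq P$ while $(A+P)B\not\subseteq P^2$, with neither $A+P$ nor $B$ contained in $P$; this directly violates the definition of almost prime. The contradiction forces $A\subseteq P$ or $B\subseteq P$, i.e. $P$ is prime.

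The step I expect to be the crux is manufacturing a product that lands in $P$ yet escapes $P^2$. Almost-primeness is powerless against a product lying inside $P^2$, so the whole difficulty is to perturb the given data until the product provably leaves $P^2$; enlarging $A$ to $A+P$ and reading $B\not\subseteq P^2:P$ as $PB\not\subseteq P^2$ is precisely what achieves this. Note it is essential to enlarge the left factor, since the colon $P^2:P$ concerns left multiplication by $P$; enlarging $B$ instead would produce the term $AP$ and require information about $(P^2:P)^*$, which is not what is assumed. The remaining verifications are routine closure facts ($A+P$ is a right ideal, $PB\subseteq P$, and distributivity of the product over the sum), which I would check quickly and dispatch.
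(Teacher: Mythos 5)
Your proof is correct and rests on exactly the same key idea as the paper's: enlarge the left factor by adjoining $P$ so the product contains $PB$, then read the hypothesis $(P^2:P)\subseteq P$ as saying that $PB\not\subseteq P^2$ whenever $B\not\subseteq P$, so the perturbed product lands in $P$ but escapes $P^2$. The only difference is cosmetic: the paper runs the argument with the principal right ideals $(a\rangle+P$ and $(b\rangle$ for chosen elements $a\in A\setminus P$, $b\in B\setminus P$, with a case split on whether $((a\rangle+P)(b\rangle\subseteq P^2$, whereas you work directly with $A+P$ and $B$ and avoid both the passage to elements and the case analysis.
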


\begin{proof}  $(1)\Rightarrow(2)$ Trivial by definition.

$(2)\Rightarrow(1)$ Assume that the almost prime right ideal $P$ is not a prime right ideal.
Then, there exist right ideals $A$, $B$ of $R$ such that $AB\subseteq P$ with $A\not\subseteq P$ and $B\not\subseteq P$.
Hence, by $(2)$ we get $AB\subseteq P^2$.
Let $a\in A{\setminus}P$ and $b\in B{\setminus}P$. 
Then,
\[
((a\rangle+P)(b\rangle=(a\rangle.(b\rangle+P(b\rangle\subseteq AB+P(b\rangle\subseteq P.
\] 
If $((a\rangle+P)(b\rangle\subseteq P^2$, then $P(b\rangle\subseteq P^2$. This implies
that $Pb\subseteq P^2$, and thus $b\in( P^2:P)\subseteq P$. This contradicts with $b\in B{\setminus}P$.
If $((a\rangle+P)(b\rangle\not\subseteq P^2$, then by $(2)$ we get that 
either $(a\rangle+P\subseteq P$ or $(b\rangle\in P$, which implies $a\in P$ or $b\in P$, respectively. Contradiction.

\end{proof}

\begin{theorem}\label{basis}  Let $R$ be ring and $P$ be a right ideal of $R$ such that $P^2=0$. Then the following statements are equivalent.

$(1)$ $P$ is a weakly prime right ideal.

$(2)$ $P$ is an almost prime right ideal. 

\end{theorem}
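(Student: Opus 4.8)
The plan is to exploit the hypothesis $P^2=0$ to collapse the defining condition of an almost prime right ideal directly into that of a weakly prime right ideal. The key observation is that, for any right ideals $A,B$ of $R$, the clause $AB\not\subseteq P^2$ occurring in Definition \ref{basic} becomes, under the assumption $P^2=0$, simply the clause $AB\neq 0$ (equivalently $0\neq AB$): indeed $P^2=\{0\}$, and a subset of $R$ fails to be contained in $\{0\}$ precisely when it contains a nonzero element. Once this identification is made, the defining implications of the two notions coincide verbatim, and both directions follow immediately.

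For $(1)\Rightarrow(2)$ I would take right ideals $A,B$ with $AB\subseteq P$ and $AB\not\subseteq P^2$. By the observation above the condition $AB\not\subseteq P^2$ reads $0\neq AB$, so $0\neq AB\subseteq P$, and the weak primeness of $P$ yields $A\subseteq P$ or $B\subseteq P$. Hence $P$ is almost prime. For $(2)\Rightarrow(1)$ I would reverse this: given right ideals $A,B$ with $0\neq AB\subseteq P$, the nonvanishing $AB\neq 0$ is exactly $AB\not\subseteq 0=P^2$, so the almost primeness of $P$ delivers $A\subseteq P$ or $B\subseteq P$, which is precisely the conclusion required for weak primeness.

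There is essentially no obstacle here: the entire content of the theorem is the single remark that $P^2=0$ identifies the exclusion clause $AB\not\subseteq P^2$ with the nonvanishing clause $AB\neq 0$, after which the conclusions of the two definitions are literally the same. The only point that warrants care is the elementary set-theoretic reading of $\not\subseteq\{0\}$ as ``contains a nonzero element''; everything else is a direct substitution into Definition \ref{basic} and the definition of a weakly prime right ideal recalled in the introduction.
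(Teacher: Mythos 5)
Your proposal is correct and is essentially the paper's own argument: both proofs rest on the single observation that when $P^2=0$ the clause $AB\not\subseteq P^2$ is the same as $0\neq AB$, so the two definitions coincide. The paper merely states the direction $(1)\Rightarrow(2)$ as immediate from the definitions (it holds even without $P^2=0$, since $0\in P^2$ always forces $AB\not\subseteq P^2$ to imply $AB\neq 0$), while you spell it out; the substance is identical.
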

\begin{proof}   $(1)\Rightarrow(2)$ Follows from definition.

$(2)\Rightarrow(1)$ Suppose that  $A$ and $B$ are right ideals of $R$ such that $0\neq AB\subseteq P$.
Then $AB \not\subseteq P^2=0$. Thus, by $(2)$ we are done. 
\end{proof}

The next result is a consequence of Theorem \ref{basis}.

\begin{corollary}\label{corollarybasis}
Let $R$ be ring such that $R^2=0$, and let $P$ be a right ideal of $R$. Then the following statements are equivalent.

$(1)$ $P$ is a weakly prime right ideal.

$(2)$ $P$ is an almost prime right ideal. 

\end{corollary}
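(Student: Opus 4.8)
The plan is to deduce everything from Theorem~\ref{basis} by verifying that its hypothesis holds automatically under the stronger assumption $R^2=0$. Since $P$ is a right ideal of $R$, we have $P\subseteq R$, and therefore
\[
P^2=P\cdot P\subseteq R\cdot R=R^2=0,
\]
so that $P^2=0$. This is the only observation that genuinely needs to be made.

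Having established $P^2=0$, I would then simply apply Theorem~\ref{basis}, whose hypothesis is precisely $P^2=0$. That theorem states that under this condition $P$ is a weakly prime right ideal if and only if $P$ is an almost prime right ideal, which is exactly the equivalence $(1)\Leftrightarrow(2)$ we are asked to prove. Thus the corollary follows immediately.

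I do not expect any real obstacle here: the content of the result lies entirely in Theorem~\ref{basis}, and the corollary is a specialization obtained by noticing that the global condition $R^2=0$ forces the local condition $P^2=0$ for \emph{every} right ideal $P$ at once. The only point worth stating carefully is the containment $P^2\subseteq R^2$, which is immediate from $P\subseteq R$ together with the definition of the product of right ideals; everything else is inherited from the preceding theorem.
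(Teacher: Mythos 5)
Your proposal is correct and is exactly the paper's intended argument: the corollary is stated there as an immediate consequence of Theorem~\ref{basis}, with the (implicit) observation that $P\subseteq R$ forces $P^2\subseteq R^2=0$. Nothing further is needed.
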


Recall that  (see e.g., \cite{L}) a nonzero right ideal $P$ of a ring $R$ is called a minimal right ideal,
if for any right ideal $I$ of $R$ with $0\subseteq I\subseteq P$, either $I$ = $0$ or $I = P$.
The next lemma is known as {\it Brauer's lemma}.

\begin{lemma}\label{Brauer}\cite{L}
Any minimal right ideal $P$ of a ring $R$ with identity satisfies $P^2=0$ or $P=(e\rangle$ for some idempotent $e$ of $R$.
\end{lemma}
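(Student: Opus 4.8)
The plan is to use the minimality of $P$ repeatedly to squeeze right ideals down to either $0$ or $P$, and to manufacture the idempotent from an annihilator argument. First I would consider the right ideal $P^2$, which satisfies $P^2 \subseteq P$ since $P$ is a right ideal. By minimality of $P$, either $P^2 = 0$, in which case the first alternative holds and we are finished, or $P^2 = P$. So from now on I assume $P^2 = P \neq 0$ and aim to produce an idempotent $e$ with $P = (e\rangle$.

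Since $P^2 = \sum_{x \in P} xP \neq 0$, there must exist some $a \in P$ with $aP \neq 0$. Now $aP$ is a right ideal of $R$, because $(aP)R = a(PR) \subseteq aP$, and $aP \subseteq P$, because $a \in P$ forces $aR \subseteq P$ and hence $aP \subseteq aR \subseteq P$. As $aP \neq 0$, minimality of $P$ gives $aP = P$. In particular $a \in P = aP$, so there is an element $e \in P$ with $ae = a$; note that $a \neq 0$, so $e \neq 0$ as well.

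The key step is to show that $e$ is idempotent. I would introduce the right annihilator of $a$ inside $P$, namely $I = \{x \in P \mid ax = 0\}$. This is a right ideal of $R$ contained in $P$: if $ax = 0$ and $r \in R$, then $a(xr) = (ax)r = 0$ and $xr \in P$. Since $ae = a \neq 0$, the element $e$ lies outside $I$, so $I \neq P$, and minimality forces $I = 0$. Computing $a(e^2 - e) = (ae)e - ae = ae - ae = 0$ shows that $e^2 - e \in I = 0$, so $e^2 = e$.

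Finally, $e$ is a nonzero idempotent of $P$, so $(e\rangle = eR$ is a nonzero right ideal with $eR \subseteq PR \subseteq P$; minimality then yields $P = (e\rangle$, which gives the second alternative. The main obstacle is the annihilator argument in the third paragraph: the choice to examine $\{x \in P \mid ax = 0\}$ and to read off $e^2 = e$ from the identity $a(e^2 - e) = 0$ is the one nonroutine idea, whereas the surrounding reductions are direct applications of minimality.
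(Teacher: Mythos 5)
Your proof is correct, and in fact it is precisely the classical argument for Brauer's lemma found in the cited reference \cite{L}: reduce via minimality to $P^2=P$, find $a\in P$ with $aP=P$, extract $e\in P$ with $ae=a$, and conclude $e^2=e$ by showing the right ideal $\{x\in P\mid ax=0\}$ is proper in $P$ and hence zero. Note that the paper itself offers no proof of this lemma (it is quoted from \cite{L}), so there is nothing further to compare; your write-up stands on its own.
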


It is well known that every right ideal generated by an idempotent element is an idempotent ideal.
Hence, combining Lemma \ref{Brauer} and Theorem \ref{basis} one obtains the following corollary.

\begin{corollary}\label{corollary}
Let $P$ be a minimal right  ideal of a ring $R$  with identity. If $P$ is an almost prime right ideal but not an idempotent ideal, then $P$ is a weakly prime right ideal. 
\end{corollary}

Note that in Example \ref{example} $(ii)$, $J=\left\{0,c\right\}$ is an almost prime minimal right ideal, and $J^2=0$.
Hence, $J$ is a weakly prime right ideal by Corollary \ref{corollary}.

Bataineh \cite{B} proved that an ideal $P$ is an almost prime ideal if and only if
$P/P^2$ is a weakly prime ideal of $R/P^2$ in a commutative ring $R$.
In the next theorem, we show that the same holds for almost prime right ideals and weakly prime right ideals in noncommutative rings.

\begin{theorem}\label{weakly} Let $R$ be a ring and $P$ be an ideal of $R$. Then, the following statements are equivalent.

$(1)$ $P$ is an almost prime right ideal of $R$.

$(2)$  $P/P^2$  is weakly prime right ideal of $R/P^2$. 

\end{theorem}

\begin{proof} $(1)\Rightarrow(2)$ Suppose that $\bar I, \bar J$ are right ideals of $R/P^2$ such that
\[
\bar 0\neq\bar I\bar J\subseteq\bar P=P/P^2.
\]
Thus, there exist right ideals $I\supseteq P^2$ and $J\supseteq P^2$ of $R$ such that $\bar I=I/P^2$ and $\bar J$=$J/P^2$.
Therefore, $P^2/P^2\neq(IJ+P^2)/P^2 \subseteq P/P^2$, thus $P^2\neq IJ\subseteq P$.
Now by $(1)$ we have that either $I\subseteq P$ or $J\subseteq P$, since $IJ\not\subseteq P^2$.
This yields that either $\bar I\subseteq\bar P$ or $\bar J\subseteq\bar P$.

$(2)\Rightarrow(1)$ Suppose that $I,J$ are right ideals of $R$ such that $IJ\subseteq P$ and $IJ\not\subseteq P^2$.
Then, $\bar I$=$(I+P^2 )/P^2$, $\bar J=(J+P^2 )/P^2$ are right ideals of $R/P^2$.
Moreover,
\[
\bar I\bar J=(IJ+IP^2+P^2J+P^4+P^2 )/P^2\subseteq P/P^2=\bar P,
\]
and $\bar I\bar J\not\subseteq\bar P^2$. Thus, $\bar 0\neq\bar I\bar J\subseteq\bar P$ and by $(2)$, either $\bar I\subseteq\bar P$ or $\bar J\subseteq\bar P$.
Consequently, $I\subseteq P$ or $ J\subseteq P$.
\end{proof}

Let $f:R\to S$ be a ring homomorphism for commutative rings $R,S$, and $P$ be an ideal of $R$ such that $\text{\rm ker}f\subseteq P$.
It is well known that $P$ is a prime ideal of $R$ if and only if $f(P)$ is a prime ideal of $S$.
The next theorem is a noncommutative analogue of the problem for almost prime right ideals.

\begin{theorem}\label{fully1}
Let $f:R\to S$ be a ring epimorphism, and $P$ be an almost prime right ideal of $R$ such that
$\text{\rm ker}f\subseteq P$. Then $f(P)$ is an almost prime right ideal of $S$.
\end{theorem}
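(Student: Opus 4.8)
The plan is to verify directly that $f(P)$ satisfies the defining condition of an almost prime right ideal from Definition \ref{basic}, by pulling right ideals of $S$ back along $f$ and exploiting surjectivity together with the hypothesis $\ker f\subseteq P$. First I would record the preliminary facts that make everything work. Since $f$ is onto, $f(P)$ is a right ideal of $S$; moreover, for any subset $X\subseteq R$ one has $f^{-1}(f(X))=X+\ker f$, so because $\ker f\subseteq P$ the identity $f^{-1}(f(P))=P$ holds. This also shows that $f(P)$ is proper, since $f(P)=S$ would force $P=f^{-1}(S)=R$, contradicting that $P$ is proper.

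Next, given right ideals $C,D$ of $S$ with $CD\subseteq f(P)$ and $CD\not\subseteq f(P)^2$, I would set $A=f^{-1}(C)$ and $B=f^{-1}(D)$. These are right ideals of $R$ containing $\ker f$, and surjectivity yields $f(A)=C$ and $f(B)=D$, whence $f(AB)=f(A)f(B)=CD$. From $f(AB)\subseteq f(P)$ and the identity $f^{-1}(f(P))=P$ I then obtain $AB\subseteq P$.

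The crucial step is to transfer the hypothesis $CD\not\subseteq f(P)^2$ into $AB\not\subseteq P^2$. For this I would use $f(P^2)=f(P)^2$. If $AB\subseteq P^2$ were to hold, then $CD=f(AB)\subseteq f(P^2)=f(P)^2$, contradicting the assumption; hence $AB\not\subseteq P^2$. Applying the almost primeness of $P$ to the pair $A,B$ gives $A\subseteq P$ or $B\subseteq P$, and applying $f$ yields $C=f(A)\subseteq f(P)$ or $D=f(B)\subseteq f(P)$, which is exactly what is required.

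The only genuine obstacle is bookkeeping rather than ideas. The multiplicativity identities $f(AB)=f(A)f(B)$ and $f(P^2)=f(P)^2$ hold for any ring homomorphism by expanding elements as finite sums of products; surjectivity is needed only to recover $f(A)=C$ and $f(B)=D$ from the preimages, and the hypothesis $\ker f\subseteq P$ is needed only for $f^{-1}(f(P))=P$. I expect the identity $f(P^2)=f(P)^2$ to be the key point, since it is precisely what converts the condition of not being contained in the square in $S$ into the corresponding condition in $R$, with the remaining manipulations being standard image and preimage computations.
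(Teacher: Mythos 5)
Your proof is correct and takes essentially the same route as the paper's: both pull the ideals of $S$ back along $f$, use surjectivity and $\ker f\subseteq P$ to get $f^{-1}(f(P))=P$, transfer the non-containment in the square via $f(P^2)=f(P)^2$, and then apply almost primeness of $P$. The only difference is cosmetic — you additionally check that $f(P)$ is proper, a point the paper leaves implicit.
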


\begin{proof} 
Suppose that $A_2B_2\subseteq f(P)$ and $A_2B_2\not\subseteq (f(P))^2$ for right ideals $A_2,B_2$ of $S$.
Then, $\text{\rm ker}f\subseteq f^{-1}(A_2 )=A_1$ and $\text{\rm ker}f\subseteq f^{-1}(B_2 )=B_1$ are right ideals of $R$.
Hence, $f(A_1 )=A_2$ and $f(B_1 )=B_2$, since $f$ is an epimorphism. Then, we have that
\[
f(A_1B_1 )=A_2B_2\subseteq f(P),
\]
and
\[
f(A_1B_1 )\not\subseteq(f(P))^2=f(P^2).
\]
Thus $A_1B_1\subseteq f^{-1}(f(A_1B_1))\subseteq f^{-1}(f(P))=P$ and $A_1B_1\not\subseteq P^2$. 
Now by assumption, either $A_1\subseteq P$ or $B_1\subseteq P$; i.e., either $A_2\subseteq f(P)$ or $B_2\subseteq f(P)$.
\end{proof}

\begin{corollary}
Let $f:R\to S$ be a ring epimorphism, and $B$ be a right ideal of $S$ such that $f^{-1} (B)$ is an almost prime right ideal of $R$.
Then, $B$ is an almost prime right ideal of $S$.
\end{corollary}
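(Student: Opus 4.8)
The plan is to deduce this directly from Theorem \ref{fully1} by taking $P := f^{-1}(B)$, which is an almost prime right ideal of $R$ by hypothesis. To invoke that theorem I first need to check its standing assumption $\text{\rm ker}f\subseteq P$. This is immediate: since $B$ is a right ideal of $S$ we have $0\in B$, so $\text{\rm ker}f=f^{-1}(0)\subseteq f^{-1}(B)=P$.

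Next I would apply Theorem \ref{fully1} to the epimorphism $f$ and the almost prime right ideal $P=f^{-1}(B)$, which satisfies $\text{\rm ker}f\subseteq P$. The theorem then yields that $f(P)$ is an almost prime right ideal of $S$. It remains only to identify $f(P)$ with $B$. Because $f$ is surjective, the set-theoretic identity $f\bigl(f^{-1}(B)\bigr)=B$ holds (in general $f(f^{-1}(B))=B\cap f(R)$, and here $f(R)=S$). Hence $f(P)=B$, and $B$ is an almost prime right ideal of $S$.

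One bookkeeping point worth noting is properness: the notion ``almost prime right ideal'' is reserved for proper ideals, so I should confirm $B\neq S$. This follows since $P=f^{-1}(B)$ is assumed to be almost prime and hence proper, so $P\neq R$; were $B=S$ we would get $f^{-1}(B)=f^{-1}(S)=R$, a contradiction. Thus $B$ is a proper right ideal, and the conclusion is legitimate.

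There is essentially no obstacle here beyond verifying the surjectivity identity $f(f^{-1}(B))=B$; the real content has already been carried out in Theorem \ref{fully1}, and this corollary is the natural ``pullback'' companion to that ``pushforward'' result.
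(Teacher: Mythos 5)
Your proof is correct and follows exactly the paper's route: the paper's own (one-line) proof likewise observes that $\ker f\subseteq f^{-1}(B)$ and invokes Theorem \ref{fully1} with $P=f^{-1}(B)$, using surjectivity to identify $f(f^{-1}(B))=B$. Your write-up simply makes explicit the details (including the properness check) that the paper leaves implicit.
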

\begin{proof}
Since the inverse image of any right ideal of $S$ is a right ideal of $R$ containing $\text{\rm ker}f$, the proof follows from Theorem \ref{fully1}.
\end{proof}

\begin{theorem}\label{fully2}
Let $f:R\to S$ be a ring epimorphism, and $P$ be a right ideal of $R$ such that $\text{\rm ker}f\subseteq P^2$.
If $f(P)$ is an almost prime right ideal of $S$, then $P$ is an almost prime right ideal of $R$.
\end{theorem}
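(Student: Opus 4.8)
The plan is to run the argument of Theorem \ref{fully1} in reverse, passing to \emph{images} under $f$ rather than inverse images, and invoking the stronger hypothesis $\text{\rm ker}f\subseteq P^2$ at the two points where it is genuinely needed. First I would take right ideals $A,B$ of $R$ with $AB\subseteq P$ and $AB\not\subseteq P^2$, the goal being to show $A\subseteq P$ or $B\subseteq P$. Since $f$ is an epimorphism, $f(A)$ and $f(B)$ are right ideals of $S$ (using $f(A)S=f(A)f(R)=f(AR)\subseteq f(A)$), and because $f$ is a ring homomorphism we have $f(A)f(B)=f(AB)\subseteq f(P)$ together with the identity $(f(P))^2=f(P^2)$ already exploited in Theorem \ref{fully1}.

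The crucial step is to verify that $f(A)f(B)\not\subseteq(f(P))^2$. Here I would argue by contradiction: if $f(AB)=f(A)f(B)\subseteq f(P^2)$, then $AB\subseteq f^{-1}(f(AB))\subseteq f^{-1}(f(P^2))$. Now the standard identity $f^{-1}(f(X))=X+\text{\rm ker}f$, valid for any additive subgroup $X$, gives $f^{-1}(f(P^2))=P^2+\text{\rm ker}f=P^2$, since $\text{\rm ker}f\subseteq P^2$. This forces $AB\subseteq P^2$, contradicting the choice of $A,B$. This is exactly where the kernel hypothesis does its work, and I expect it to be the main obstacle: without $\text{\rm ker}f\subseteq P^2$ one could only conclude $AB\subseteq P^2+\text{\rm ker}f$, which is too weak to reach a contradiction.

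Finally, since $f(P)$ is almost prime and $f(A)f(B)\subseteq f(P)$ with $f(A)f(B)\not\subseteq(f(P))^2$, I would conclude that either $f(A)\subseteq f(P)$ or $f(B)\subseteq f(P)$. Taking, say, $f(A)\subseteq f(P)$ and applying $f^{-1}$ once more, $A\subseteq f^{-1}(f(A))\subseteq f^{-1}(f(P))=P+\text{\rm ker}f=P$, where we use $\text{\rm ker}f\subseteq P^2\subseteq P$. The symmetric case yields $B\subseteq P$, which completes the proof. The whole argument rests on the set-theoretic identity $f^{-1}(f(X))=X+\text{\rm ker}f$ for additive subgroups, so I would state that once and reuse it in both the non-containment step and the final pullback.
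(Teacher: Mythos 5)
Your proof is correct and follows essentially the same route as the paper's: pass to images $f(A)f(B)=f(AB)\subseteq f(P)$, rule out $f(AB)\subseteq f(P^2)=(f(P))^2$ by pulling back and using $\ker f\subseteq P^2$ to get the contradiction $AB\subseteq P^2$, then apply almost primality of $f(P)$ and pull back once more via $f^{-1}(f(P))=P$. Your only addition is making explicit the identity $f^{-1}(f(X))=X+\ker f$, which the paper uses implicitly at the same two steps.
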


\begin{proof}
Suppose that $A_1B_1\subseteq P$ and $A_1B_1\not\subseteq P^2$ for right ideals $A_1,B_1$ of $R$.
Then, $f(A_1)f(B_1)=f(A_1B_1)\subseteq f(P)$.
If $f(A_1B_1 )\subseteq f(P^2)$, then
\[
A_1B_1\subseteq f^{-1}(f(A_1B_1 ))\subseteq f^{-1}(f(P^2 ))=P^2,
\]
which is a contradiction. Hence, $f(A_1)f(B_1)=f(A_1B_1)\not\subseteq(f(P))^2$.
Since $f(P)$ is an almost prime right ideal of $S$, then either $f(A_1)\subseteq f(P)$
or $f(B_1)\subseteq f(P)$. Consequently, either $A_1\subseteq f^{-1}(f(A_1 ))\subseteq f^{-1}(f(P))=P$
or $B_1\subseteq P$.
\end{proof}

\begin{corollary}
Let $f:R\to S$ be a ring epimorphism, and $B$ be an almost prime right ideal of $S$ such that $\text{\rm ker}f\subseteq(f^{-1} (B))^2$.
Then, $f^{-1}(B)$ is an almost prime right ideal of $R$.
\end{corollary}

\begin{proof}
Let $P=f^{-1}(B)$. Then, $P$ is an almost prime right ideal of $R$ by Theorem \ref{fully2},
since $\text{\rm ker}f\subseteq P^2$ and $f(P)=f(f^{-1}(B))=B$ is an almost prime right ideal of $S$.
\end{proof}

Theorem \ref{fully1} and Theorem \ref{fully2} show us how the almost prime characteristic is transferred
from one ring to another when they are linked by a homomorphism.
The following theorem is the transition of almost prime property to quotient rings.

\begin{theorem}\label{fully3}
Let $R$ be a ring, $I$ be an ideal of $R$. Let $P$ be a right ideal $P$ of $R$ such that $I\subseteq P$.
If $P$ is an almost prime right ideal of $R$ then $P/I$ is an almost prime right ideal of $R/I$. 
\end{theorem}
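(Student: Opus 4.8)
The plan is to recognize the quotient $R/I$ as the target of the canonical epimorphism and reduce the statement directly to Theorem \ref{fully1}. Let $\pi\colon R\to R/I$ be the natural projection $r\mapsto r+I$. This is a ring epimorphism with $\text{\rm ker}\,\pi=I$, and by hypothesis $\text{\rm ker}\,\pi=I\subseteq P$. Moreover $\pi(P)=\{p+I\mid p\in P\}=P/I$. Since $P$ is an almost prime right ideal of $R$ and the containment $\text{\rm ker}\,\pi\subseteq P$ holds, Theorem \ref{fully1} applies directly and yields that $\pi(P)=P/I$ is an almost prime right ideal of $R/I$. This settles the statement in one step, so there is essentially no separate obstacle beyond checking the hypotheses of Theorem \ref{fully1}.

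If one prefers a self-contained argument not routed through Theorem \ref{fully1}, I would verify the defining condition directly. Let $\bar A,\bar B$ be right ideals of $R/I$ with $\bar A\bar B\subseteq P/I$ and $\bar A\bar B\not\subseteq(P/I)^2$. By the correspondence theorem these lift to right ideals $A,B$ of $R$ with $I\subseteq A$, $I\subseteq B$, $\bar A=A/I$, and $\bar B=B/I$. From $\bar A\bar B\subseteq P/I$ one gets $AB+I\subseteq P$, hence $AB\subseteq P$ since $I\subseteq P$.

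The one point that requires care is transferring the non-containment. Here $(P/I)^2=(P^2+I)/I$, so $\bar A\bar B\not\subseteq(P/I)^2$ unwinds to $AB\not\subseteq P^2+I$; because $P^2\subseteq P^2+I$, this forces $AB\not\subseteq P^2$. With $AB\subseteq P$ and $AB\not\subseteq P^2$ in hand, the hypothesis that $P$ is almost prime in $R$ gives $A\subseteq P$ or $B\subseteq P$, and passing back to the quotient yields $\bar A\subseteq P/I$ or $\bar B\subseteq P/I$. Identifying $(P/I)^2$ with $(P^2+I)/I$ and confirming that the strict non-containment survives the passage to the quotient is the only delicate bookkeeping; everything else is the correspondence theorem.
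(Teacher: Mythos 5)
Your proposal is correct, and your primary argument takes a genuinely different (and slicker) route than the paper. The paper proves Theorem \ref{fully3} directly: it lifts right ideals $\bar A=A/I$, $\bar B=B/I$ via the correspondence theorem, translates $\bar A\bar B\subseteq \bar P$ and $\bar A\bar B\not\subseteq\bar P^2$ into $AB\subseteq P$ and $AB\not\subseteq P^2$, and invokes almost primeness of $P$ --- this is essentially your backup argument, including the key bookkeeping identity $(P/I)^2=(P^2+I)/I$ and the observation that $AB\not\subseteq P^2+I$ forces $AB\not\subseteq P^2$. Your main argument instead observes that the canonical projection $\pi\colon R\to R/I$ is an epimorphism with $\ker\pi=I\subseteq P$ and $\pi(P)=P/I$, so Theorem \ref{fully1} applies verbatim; this is a valid one-line reduction that the paper itself does not make, even though it places Theorem \ref{fully3} immediately after remarking that Theorems \ref{fully1} and \ref{fully2} describe transfer along homomorphisms. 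What your reduction buys is economy and a structural insight: the quotient statement is literally a special case of the image statement, so no separate proof is needed. What the paper's direct proof buys is self-containedness and an explicit record of the quotient computations ($\bar A\bar B=(AB+I)/I$, $\bar P^2=(P^2+I)/I$), which are reused implicitly elsewhere (e.g., in Theorem \ref{weakly}). Both of your arguments are complete and correct as stated.
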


\begin{proof}
Suppose that $\bar A\bar B\subseteq\bar P=P/I$ and $\bar A\bar B\not\subseteq \bar P^2$ for right ideals $\bar A,\bar B$ in $R/I$. 
Assume that $\bar A=A/I$ and $\bar B=B/I$ for some right ideals $A\supseteq I$ and $B\supseteq I$.
Then, $(AB+I)/I\subseteq P/I$ and $(AB+I)/I\not\subseteq(P^2+I)/I$,
which implies that $AB\subseteq P$ and $AB\not\subseteq P^2$. 
So, either $A\subseteq P$ or $B\subseteq P$, and thus $\bar A\subseteq\bar P$ or $\bar B \subseteq\bar P$.
\end{proof}

The converse of Theorem \ref{fully3} is not true in general. For instance, let $P=I$ be
an ideal which is not an almost prime right ideal. However, the zero ideal $\bar 0=I/I$ is an almost prime right ideal of $R/I$ .

\section{Fully almost prime right rings}

\begin{definition} 
A ring in which every right ideal is an almost prime right ideal is called a fully almost prime right ring.
\end{definition}

Note that every fully prime right ring (fully weakly prime right ring, fully idempotent right ring) is a fully almost prime right ring.
The ring $R$ in Example \ref{example} $(ii)$ is a fully almost prime right ring.
The next result is a consequence of Theorem \ref{basis}.
\begin{corollary}\label{reslt}
Let $R$ be a ring such that $P^2=0$ for every right ideal $P$ of $R$. Then, the following statement are equivalent.

$(1)$ $R$ is fully almost prime right ring.

$(2)$ $R$ is fully weakly prime right ring.
\end{corollary}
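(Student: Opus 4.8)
The plan is to derive Corollary \ref{reslt} directly from Theorem \ref{basis} by applying it to every right ideal of $R$ simultaneously. The hypothesis $P^2 = 0$ for every right ideal $P$ is exactly the standing assumption of Theorem \ref{basis}, so the per-ideal equivalence is already available; the only work is to quantify over all right ideals and match the definitions of the two ``fully'' properties.

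First I would unwind the definitions. By the definition of a fully almost prime right ring, statement $(1)$ says that every right ideal $P$ of $R$ is an almost prime right ideal. The analogous (unstated but parallel) notion of a fully weakly prime right ring in $(2)$ says that every right ideal $P$ of $R$ is a weakly prime right ideal. Thus both statements are universally quantified conditions ranging over the same collection of right ideals, and it suffices to establish the equivalence ideal-by-ideal.

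Next I would fix an arbitrary right ideal $P$ of $R$. Since the hypothesis guarantees $P^2 = 0$, Theorem \ref{basis} applies to $P$ and gives that $P$ is a weakly prime right ideal if and only if $P$ is an almost prime right ideal. Because $P$ was arbitrary, ``every right ideal is weakly prime'' holds if and only if ``every right ideal is almost prime'' holds, which is precisely the equivalence $(1)\Leftrightarrow(2)$.

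There is essentially no genuine obstacle here: the corollary is a straightforward globalization of Theorem \ref{basis}. The only point requiring mild care is confirming that the hypothesis $P^2=0$ is assumed for \emph{every} right ideal, so that Theorem \ref{basis} may be invoked for each $P$ without additional justification; once that is observed, both directions follow immediately from the two directions of Theorem \ref{basis} applied uniformly. I would therefore present the proof as a single short paragraph citing Theorem \ref{basis} and noting that the hypothesis holds for all right ideals.
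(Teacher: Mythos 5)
Your proposal is correct and matches the paper's intent exactly: the paper presents this corollary as an immediate consequence of Theorem \ref{basis}, applied to each right ideal $P$ (each of which satisfies $P^2=0$ by hypothesis), which is precisely your ideal-by-ideal argument.
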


\begin{remark}
Corollay \ref{corollarybasis} suggests that the assumption of Corollary \ref{reslt} can be replaced by $R^2=0$.
\end{remark}

\begin{theorem}
Let $f:R\to S$ be a ring epimorphism. If $R$ is a fully almost prime right ring, so is $S$.
\end{theorem}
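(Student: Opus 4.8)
The plan is to reduce the claim to the corollary that immediately follows Theorem \ref{fully1}, which already transfers the almost prime property of $f^{-1}(B)$ to $B$. The essential observation is that one should work with inverse images rather than images: for a right ideal $B\subseteq S$ there is no natural object $f(B)$ to consider, whereas $f^{-1}(B)$ is always a right ideal of $R$, and it automatically contains $\ker f$ since $f(\ker f)=0\in B$. This is exactly the shape of hypothesis demanded by that corollary, so the fully almost prime assumption on $R$ can be brought to bear on $f^{-1}(B)$ without any prior knowledge of the structure of $B$.

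Concretely, I would start from an arbitrary proper right ideal $B$ of $S$ and set $P=f^{-1}(B)$, which is a right ideal of $R$. Since $R$ is a fully almost prime right ring, every right ideal of $R$ is almost prime; in particular $P$ is an almost prime right ideal of $R$. The corollary following Theorem \ref{fully1} then yields at once that $B$ is an almost prime right ideal of $S$. As $B$ was an arbitrary proper right ideal, this shows $S$ is fully almost prime.

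The only point needing a moment of care is properness, since almost prime right ideals are by definition proper. I would check that $P=f^{-1}(B)$ is proper whenever $B$ is: if $f^{-1}(B)=R$, then surjectivity of $f$ forces $B=f(R)=S$, contrary to $B$ being proper, so $f^{-1}(B)\neq R$. I do not anticipate any genuine obstacle, as the substantive work has already been completed in Theorem \ref{fully1} and its corollary; the argument is essentially a direct application, and the only conceptual step is recognizing that passing to the inverse image is what lets the hypothesis on $R$ be invoked.
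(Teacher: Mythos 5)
Your proof is correct and follows essentially the same route as the paper: the paper also pulls back an arbitrary right ideal $P$ of $S$ to $f^{-1}(P)\supseteq\ker f$, invokes the fully almost prime hypothesis on $R$, and applies Theorem \ref{fully1} to conclude $f(f^{-1}(P))=P$ is almost prime, which is exactly what the corollary you cite packages. Your additional check that $f^{-1}(B)$ is proper is a harmless refinement the paper leaves implicit.
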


\begin{proof}
Let $P$ be a right ideal of $S$. Then $f^{-1}(P)\supseteq\text{\rm ker}f$ is an almost prime right ideal of $R$.
Then, by Theorem \ref{fully1} we get that $f(f^{-1} (P))=P$ is an almost prime right ideal of $S$.
\end{proof}

\begin{theorem} Let $f:R\to S$ be a ring epimorphism such that ${\rm ker}f\subseteq I^2$,
for any right ideal $I$ of $R$. If $S$ is a fully almost prime right ring, so is $R$.
\end{theorem}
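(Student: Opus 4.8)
The plan is to reduce the statement to a pointwise application of Theorem \ref{fully2}. Since a fully almost prime right ring is by definition one in which \emph{every} right ideal is almost prime, it suffices to fix an arbitrary right ideal $P$ of $R$ and prove that $P$ is an almost prime right ideal of $R$.

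First I would check that $f(P)$ is a right ideal of $S$. Because $f$ is an epimorphism, each $s\in S$ has the form $s=f(r)$ for some $r\in R$, so for any $f(p)\in f(P)$ we get $f(p)s=f(p)f(r)=f(pr)\in f(P)$, using that $pr\in P$. Since $S$ is a fully almost prime right ring, $f(P)$ is therefore an almost prime right ideal of $S$.

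Next I would invoke the blanket hypothesis. The assumption $\text{\rm ker}f\subseteq I^2$ is required to hold for \emph{every} right ideal $I$ of $R$, hence in particular for the chosen $P$, giving $\text{\rm ker}f\subseteq P^2$. This is exactly the containment demanded by Theorem \ref{fully2}, and applying that theorem to $P$ yields that $P$ is an almost prime right ideal of $R$. As $P$ was arbitrary, every right ideal of $R$ is almost prime, so $R$ is a fully almost prime right ring.

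I expect no genuine obstacle here: the argument is a direct specialization of Theorem \ref{fully2}, the only point needing a word of justification being that $f(P)$ is a right ideal of $S$, which follows from the surjectivity of $f$. The essential observation is that the uniform assumption $\text{\rm ker}f\subseteq I^2$ supplies, for each individual right ideal $P$, precisely the hypothesis $\text{\rm ker}f\subseteq P^2$ that Theorem \ref{fully2} needs in order to pull the almost prime property back from $f(P)$ in $S$ to $P$ in $R$.
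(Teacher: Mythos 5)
Your proposal is correct and follows exactly the paper's own argument: fix a right ideal $P$ of $R$, note that $f(P)$ is an almost prime right ideal of $S$ because $S$ is fully almost prime, and apply Theorem \ref{fully2} using $\mathrm{ker}f\subseteq P^2$. The only difference is that you spell out the (routine) verification that $f(P)$ is a right ideal of $S$, which the paper leaves implicit.
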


\begin{proof} Let  $P$ be a right ideal of $R$. Then, $f(P)$ is an almost prime right ideal of the fully almost prime right ring $S$. 
Hence, by Theorem \ref{fully2} we get that $P$ is an almost prime right ideal of $R$.
\end{proof}

\begin{theorem}
Let $R$ be a ring, and $I$ be an ideal of $R$. If $R$ is a fully almost prime right ring, so is $R/I$.
\end{theorem}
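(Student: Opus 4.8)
The plan is to reduce the claim to Theorem~\ref{fully3} by exploiting the lattice correspondence between right ideals of the quotient $R/I$ and right ideals of $R$ containing $I$. Since the goal is to show that \emph{every} right ideal of $R/I$ is almost prime, I would begin by fixing an arbitrary right ideal $\bar P$ of $R/I$ and representing it concretely as a quotient of a right ideal of $R$.

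First I would recall that every right ideal of $R/I$ has the form $P/I$ for a right ideal $P$ of $R$ with $I\subseteq P$; this is the standard correspondence theorem for one-sided ideals, and it uses only that $I$ is a two-sided ideal, which is part of the hypothesis. Thus I may write $\bar P=P/I$ with $I\subseteq P$, and this representation is the mechanism that transfers the problem back to $R$.

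Next, since $R$ is assumed to be a fully almost prime right ring, every right ideal of $R$ is almost prime; in particular the chosen $P$ is an almost prime right ideal of $R$. The containment $I\subseteq P$ then places me precisely in the situation of Theorem~\ref{fully3}, whose hypotheses (a two-sided ideal $I$ contained in an almost prime right ideal $P$) are exactly satisfied. Applying that theorem yields that $P/I=\bar P$ is an almost prime right ideal of $R/I$.

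Because $\bar P$ was an arbitrary right ideal of $R/I$, this shows that every right ideal of $R/I$ is almost prime, that is, $R/I$ is a fully almost prime right ring. The only substantive ingredient is the correspondence theorem identifying right ideals of $R/I$ with those of $R$ lying above $I$; once Theorem~\ref{fully3} is in hand there is no genuine obstacle, and the argument amounts to a uniform application of that theorem across all right ideals of the quotient.
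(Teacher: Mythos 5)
Your proposal is correct and follows exactly the paper's own argument: lift an arbitrary right ideal $\bar P$ of $R/I$ to a right ideal $P\supseteq I$ of $R$ via the correspondence theorem, invoke the fully almost prime hypothesis to conclude $P$ is almost prime, and then apply Theorem~\ref{fully3}. No differences worth noting.
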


\begin{proof}
Suppose $\bar P$ is a right ideal of $R/I$. Then, there exist a right ideal $P\supseteq I$ of $R$
such that $\bar P=P/I$. Clearly, $P$ is an almost prime right ideal of $R$.
Hence, by Theorem \ref{fully3} $\bar P$ is an almost prime right ideal of $R/I$ .
\end{proof}

\end{document}